%88-3-25
% Lower-case    a b c d e f g h i j k l m n o p q r s t u v w x y z
% Digits        0 1 2 3 4 5 6 7 8 9
% Exclamation   !           Double quote "          Hash (number) #
% Dollar        $           Percent      %          Ampersand     &
% Acute accent  '           Left paren   (          Right paren   )
% Asterisk      *           Plus         +          Comma         ,
% Minus         -           Point        .          Solidus       /
% Colon         :           Semicolon    ;          Less than     <
% Equals        =           Greater than >          Question mark ?
% At            @           Left bracket [          Backslash     \
% Right bracket ]           Circumflex   ^          Underscore    _
% Grave accent  `           Left brace   {          Vertical bar  |
% Right brace   }           Tilde        ~
%%draft1.tex  9 July 2007, Uses EM-TEX/LATEX209 or latest %%
%\documentstyle[11pt]{article}
%%- Set up for A4 paper %%
\documentclass[11pt]{article}
\usepackage{amssymb}
\oddsidemargin   = 0 cm \evensidemargin  = 0 cm \textwidth  = 14
cm \textheight = 22 cm \headheight=0cm \topskip=0cm \topmargin=0cm
%%%%%%%%%%%%%%%%%%%%%%%%%%%%%%%%%%%%%%%%%%%%%%%%%%%%%%%%%%%%%%%%%%%%%
\newtheorem{precor}{{\bf Corollary}}

\newtheorem{precon}{{\bf Conjecture}}

\newtheorem{prealphcon}{{\bf Conjecture}}

\newtheorem{predefin}{{\bf Definition}}

\newtheorem{preexm}{{\bf Example}}

\newtheorem{preappl}{{\bf Application}}

\newtheorem{prelem}{{\bf Lemma}}

\newtheorem{preproof}{{\bf Proof.\ }}

\newenvironment{proof}[1]{\begin{preproof}{\rm
               #1}\hfill{$\blacksquare$}}{\end{preproof}}
\newtheorem{pretheorem}{{\bf Theorem}}

\newenvironment{theorem}{\begin{pretheorem}{\hspace{-0.5
               em}{\bf.\ }}}{\end{pretheorem}}
\newtheorem{prealphtheorem}{{\bf Theorem}}

\newtheorem{prealphlem}{{\bf Lemma}}

\newtheorem{prepro}{{\bf Proposition}}

\newtheorem{preprb}{{\bf Problem}}

\newtheorem{prerem}{{\bf Remark}}

\newtheorem{preapp}{{\bf Application}}

\newtheorem{prequ}{{\bf Question}}

%%%%%%%%%%%%%%%%%%%%%%%%%%
%

\def\conct[#1,#2]{\mbox {${#1} \leftrightarrow {#2}$}}
\def\dconct[#1,#2]{\mbox {${#1} \rightarrow {#2}$}}
\def\deg[#1,#2]{\mbox {$d_{_{#1}}(#2)$}}
\def\mindeg[#1]{\mbox {$\delta_{_{#1}}$}}
\def\maxdeg[#1]{\mbox {$\Delta_{_{#1}}$}}
\def\outdeg[#1,#2]{\mbox {$d_{_{#1}}^{^+}(#2)$}}
\def\minoutdeg[#1]{\mbox {$\delta_{_{#1}}^{^+}$}}
\def\maxoutdeg[#1]{\mbox {$\Delta_{_{#1}}^{^+}$}}
\def\indeg[#1,#2]{\mbox {$d_{_{#1}}^{^-}(#2)$}}
\def\minindeg[#1]{\mbox {$\delta_{_{#1}}^{^-}$}}
\def\maxindeg[#1]{\mbox {$\Delta_{_{#1}}^{^-}$}}

\def\dre[#1,#2,#3]{\mbox {${\cal E}^{^{#3}}(#1,#2)$}}
\def\var[#1,#2]{\mbox {${\rm Var}_{_{#1}}(#2)$}}
\def\ls[#1]{\mbox {$\xi^{^{#1}}$}}
\def\hom[#1,#2]{\mbox {${\rm Hom}({#1},{#2})$}}
\def\onvhom[#1,#2]{\mbox {${\rm Hom^{v}}(#1,#2)$}}
\def\onehom[#1,#2]{\mbox {${\rm Hom^{e}}(#1,#2)$}}
\def\core[#1]{\mbox {$#1^{^{\bullet}}$}}
\def\cay[#1,#2]{\mbox {${\rm Cay}({#1},{#2})$}}
\def\sch[#1,#2,#3]{\mbox {${\rm Sch}({#1},{#2},{#3})$}}
\def\cays[#1,#2]{\mbox {${\rm Cay_{s}}({#1},{#2})$}}
\def\dirc[#1]{\mbox {$\stackrel{\rightarrow}{C}_{_{#1}}$}}
\def\cycl[#1]{\mbox {${\bf Z}_{_{#1}}$}}

%%%%%%%%%%%%%%%%%%%%%%%%%%%%%%%%%%%%%%%%%%%%%%%%%%%%%%%%%%%%%%%%%%%%%
\begin{document}
%\setcounter{page}{183}
%{\footnotesize AAA {\bf ?} (200?) ?--?}\\
%\maketitle

\begin{center}
{\Large \bf About Fall Colorings of Graphs}\\
\vspace{0.3 cm}
{\bf Saeed Shaebani}\\
{\it Department of Mathematical Sciences}\\
{\it Institute for Advanced Studies in Basic Sciences (IASBS)}\\
{\it P.O. Box {\rm 45195-1159}, Zanjan, Iran}\\
{\tt s\_shaebani@iasbs.ac.ir}\\ \ \\
\end{center}
\begin{abstract}
\noindent A fall $k$-coloring of a graph $G$ is a proper
$k$-coloring of $G$ such that each vertex of $G$ sees all $k$
colors on its closed neighborhood. In this paper, we answer some
questions of \cite{dun} about some relations between fall
colorings and some other types of graph colorings. \noindent
\\
{\bf Keywords:}\ {  Graph Colorings.}\\
{\bf Subject classification: 05C}
\end{abstract}
%%%%%%%%%%%%%%%%%%%%%%%%%%%%%%%%%%%%%%%%%%%%%%%%%%%%%%%%%%%%%%%%%%%%%%%
%%%%%%%%%%%%%%%%%%%%%%%%%%%%%%%%%%%%%%%%%%%%%%%%%%%%%%%%%%%%%%%%%%%%%%%%
\section{Introduction}
All graphs considered in this paper are finite and simple
(undirected, loopless and without multiple edges). Let $G=(V,E)$
be a graph and $k\in \mathbb{N}$ and $[k]:=\{i|\ i\in
\mathbb{N},\ 1\leq i\leq k \}$. A $k$-coloring (proper k-coloring)
of $G$ is a function $f:V\rightarrow [k]$ such that for each
$1\leq i\leq k$, $f^{-1}(i)$ is an independent set. We say that
$G$ is $k$-colorable whenever $G$ admits a $k$-coloring $f$, in
this case, we denote $f^{-1}(i)$ by $V_{i}$ and call each $1\leq
i\leq k$, a color (of $f$) and each $V_{i}$, a color class (of
$f$). The minimum integer $k$ for which $G$ has a $k$-coloring,
is called the chromatic number of G and is denoted by $\chi(G)$.

Let $G$ be a graph, $f$ be a $k$-coloring of $G$ and $v$ be a
vertex of $G$. The vertex $v$ is called colorful ( or
color-dominating or $b$-dominating) if each color $1\leq i\leq k$
appears on the closed neighborhood of $v$ (\ $f(N[v])=[k]$ ). The
$k$-coloring $f$ is said to be a fall $k$-coloring (of $G$) if
each vertex of $G$ is colorful. There are graphs $G$ for which
$G$ has no fall $k$-coloring for any positive integer $k$. For
example, $C_{5}$ (a cycle with 5 vertices) and graphs with at
least one edge and one isolated vertex, have not any fall
$k$-colorings for any positive integer $k$. The notation ${\rm
Fall}(G)$ stands for the set of all positive integers $k$ for
which $G$ has a fall $k$-coloring. Whenever ${\rm
Fall}(G)\neq\emptyset$, we call $\min({\rm Fall}(G))$ and
$\max({\rm Fall}(G))$, fall chromatic number of $G$ and fall
achromatic number of $G$ and denote them by $\chi_{f}(G)$ and
$\psi_{f}(G)$, respectively. Every fall $k$-coloring of a graph
$G$ is a $k$-coloring, hence, for every graph $G$ with ${\rm
Fall}(G)\neq\emptyset$, $\chi(G)\leq\chi_{f}(G)\leq\psi_{f}(G)$.

Let $G$ be a graph, $k\in \mathbb{N}$ and $f$ be a $k$-coloring
of $G$. The coloring $f$ is said to be a colorful $k$-coloring of
$G$ if each color class contains at least one colorful vertex.
The maximum integer $k$ for which $G$ has a colorful
$k$-coloring, is called the $b$-chromatic number of $G$ and is
denoted by $\phi(G)$ (or $b(G)$ or $\chi_{b}(G)$). Every fall
$k$-coloring of $G$ is obviously a colorful $k$-coloring of $G$
and therefore, for every graph G with ${\rm
Fall}(G)\neq\emptyset$,
$\chi(G)\leq\chi_{f}(G)\leq\psi_{f}(G)\leq\phi(G)$.

Assume that $G$ is a graph, $k\in \mathbb{N}$ and $f$ is a
$k$-coloring of $G$ and $v$ be a vertex of $G$. The vertex $v$ is
called a Grundy vertex (with respect to $f$) if each color $1\leq
i<f(v)$ appears on the neighborhood of $v$. The k-coloring $f$ is
called a Grundy $k$-coloring (of $G$) if each color class of $G$
is nonempty and each vertex of $G$ is a Grundy vertex. The maximum
integer $k$ for which $G$ has a Grundy $k$-coloring, is called
the Grundy chromatic number of $G$ and is denoted by $\Gamma(G)$.
Also, the k-coloring $f$ is said to be a partial Grundy
$k$-coloring (of $G$) if each color class contains at least one
Grundy vertex. The maximum integer $k$ for which $G$ has a
partial Grundy $k$-coloring, is called the partial Grundy
chromatic number of $G$ and is denoted by $\partial\Gamma(G)$.
Every Grundy $k$-coloring of $G$ is a partial Grundy $k$-coloring
of $G$ and every colorful $k$-coloring of $G$ is a partial Grundy
$k$-coloring of $G$. Also, every fall $k$-coloring of $G$ is
obviously a Grundy $k$-coloring of $G$ and therefore, for every
graph G with ${\rm Fall}(G)\neq\emptyset$,

$\chi(G)\leq\chi_{f}(G)\leq\psi_{f}(G)\leq \left\{\begin{array}{c}
  \phi(G) \\
  \Gamma(G)
\end{array}\right.\leq \partial\Gamma(G)$.

Let $G$ be a graph and $k\in \mathbb{N}$ and $f$ be a $k$-coloring
of $G$. The k-coloring $f$ is said to be a complete $k$-coloring
(of $G$) if there is an edge between any two distinct color
classes. The maximum integer $k$ for which $G$ has a complete
$k$-coloring, is called the achromatic number of $G$ and is
denoted by $\psi(G)$. Every partial Grundy $k$-coloring of $G$ is
obviously a complete $k$-coloring of G and therefore,

$\chi(G)\leq\chi_{f}(G)\leq\psi_{f}(G)\leq \left\{\begin{array}{c}
  \phi(G) \\
  \Gamma(G)
\end{array}\right.\leq\partial\Gamma(G)\leq\psi(G)$.

The terminology fall coloring was firstly introduced in 2000 in
\cite{dun} and has received attention recently, see
\cite{MR2096633},\cite{MR2193924},\cite{dun},\cite{las}. The
colorful coloring of graphs was introduced in 1999 in \cite{irv}
with the terminology $b$-coloring. The concept of Grundy number of
graphs was introduced in 1979 in \cite{chr}. Also, achromaric
number of graphs was introduced in 1970 in \cite{har}.

Let $n \in \mathbb{N}$ and for each $1\leq i\leq n$, $G_{i}$ be a
graph. The graph with vertex set $\bigcup_{i=1}^{n}(\{i\}\times
V(G_{i}))$ and edge set $$[\bigcup_{i=1}^{n}\{\{(i,x),(i,y)\}|
\{x,y\}\in E(G_{i})\}]\bigcup[\bigcup_{1\leq i<j\leq n}\{
\{(i,a),(j,b)\}|a\in V(G_{i}),b\in V(G_{j})\}]$$ is called the
join graph of $G_{1},...,G_{n}$ and is denoted by
$\bigvee_{i=1}^{n}G_{i}$.

Cockayne and Hedetniemi proved in 1976 in \cite{coc} (but not
with the terminology "fall coloring") that if $G$ has a fall
$k$-coloring and $H$ has a fall $l$-coloring for positive
integers $k$ and $l$, then, $G\bigvee H$ has a fall
$(k+l)$-coloring.

\begin{theorem}{
Let $n\in \mathbb{N}\setminus\{1\}$ and for each $1\leq i\leq n$,
$G_{i}$ be a graph. Then,

${\rm Fall}(\bigvee_{i=1}^{n}G_{i})\neq\emptyset$ iff for each
$1\leq i\leq n$, ${\rm Fall}(G_{i})\neq\emptyset$.}
\end{theorem}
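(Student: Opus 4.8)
The plan is to prove the two implications separately, with the forward (``if each $G_i$ is fall-colorable'') direction being a routine induction and the reverse (``only if'') direction resting on a single structural observation about how the colors must distribute across the copies in the join.

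For the ``if'' direction, I would argue by induction on $n$. The base case $n=2$ is precisely the Cockayne--Hedetniemi result quoted above: if $G_1$ admits a fall $k_1$-coloring and $G_2$ a fall $k_2$-coloring, then $G_1\vee G_2$ admits a fall $(k_1+k_2)$-coloring. For the inductive step I would use the associativity of the join, writing $\bigvee_{i=1}^{n}G_i=(\bigvee_{i=1}^{n-1}G_i)\vee G_n$; the induction hypothesis supplies a fall coloring of $\bigvee_{i=1}^{n-1}G_i$, the graph $G_n$ supplies a fall coloring of its own, and one further application of Cockayne--Hedetniemi completes the step.

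For the ``only if'' direction, suppose $f$ is a fall $k$-coloring of $\bigvee_{i=1}^{n}G_i$. The crucial first step is to observe that, because every vertex of the $i$-th copy is adjacent to every vertex of the $j$-th copy whenever $i\neq j$, properness of $f$ forces the color sets $C_i:=f(\{i\}\times V(G_i))$ to be pairwise disjoint. Since a fall coloring of a nonempty graph uses all $k$ colors, the sets $C_1,\dots,C_n$ partition $[k]$. The key computation is then to restrict $f$ to the $i$-th copy and verify it is a fall $|C_i|$-coloring of $G_i$ (after relabeling $C_i$ as $[\,|C_i|\,]$): for a vertex $(i,x)$, its closed neighborhood in the join splits into the closed neighborhood of $x$ inside copy $i$ together with the whole of every other copy, and the latter part exhibits exactly the colors $\bigcup_{j\neq i}C_j=[k]\setminus C_i$. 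Since $(i,x)$ is colorful and thus sees all $k$ colors, it must already see the remaining colors $C_i$ within copy $i$, which is exactly the fall condition for the restricted coloring on $G_i$.

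I expect the main obstacle to be bookkeeping rather than anything conceptual: one must confirm that each $C_i$ is nonempty (so that the restriction is a genuine coloring with $|C_i|\geq 1$ colors, using that every $G_i$ has a vertex), that all $k$ colors actually occur in the join, and that the decomposition of the closed neighborhood of $(i,x)$ accounts for each color exactly as claimed. The hypothesis $n\geq 2$ is not essential to the argument, but it rules out the trivial case where the statement reduces to a tautology.
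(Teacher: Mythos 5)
Your proposal is correct and takes essentially the same approach as the paper: your ``only if'' argument (cross-copy edges force the color sets $C_i$ to be pairwise disjoint, so each vertex must see the colors of its own copy's set inside that copy) is exactly the paper's argument, merely phrased via a partition of $[k]$ instead of the paper's contradiction with properness. Your inductive ``if'' direction via Cockayne--Hedetniemi is just the unrolled form of the paper's direct construction, which assigns each $G_i$ a fall coloring on a shifted, disjoint block of colors.
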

\begin{proof}{
First suppose that ${\rm
Fall}(\bigvee_{i=1}^{n}G_{i})\neq\emptyset$. Consider a fall
$k$-coloring $f$ of $\bigvee_{i=1}^{n}G_{i}$. The colors appear
on $\{i\}\times V(G_{i})$ form a fall $|f(\{i\}\times
V(G_{i}))|$-coloring of $G_{i}$ ( Let $S$ be the set of colors
appear on $\{i\}\times V(G_{i})$ and $\alpha,\beta\in S$ and
$\alpha\neq\beta$ and $x\in \{i\}\times V(G_{i})$ and
$f(x)=\alpha$ and suppose that none of the neighbors of $x$ in
$\{i\}\times V(G_{i})$ have the color $\beta$. Since $f$ is a fall
$k$-coloring of $\bigvee_{i=1}^{n}G_{i}$, there exists a vertex
$y\in V(\bigvee_{i=1}^{n}G_{i})$ such that $\{x,y\}\in
E(\bigvee_{i=1}^{n}G_{i})$ and $f(y)=\beta$. On the other hand,
since $\beta\in S$, there exists a vertex $z$ in $\{i\}\times
V(G_{i})$ such that $f(z)=\beta$. Since $x$ and $y$ are adjacent,
$z$ and $y$ are adjacent, too. Also, $f(z)=f(y)=\beta$, which is a
contradiction. Therefore, $S$ forms a $|S|$-coloring of the
induced subgraph of $\bigvee_{i=1}^{n}G_{i}$ on $\{i\}\times
V(G_{i})$ and also for $G_{i}$.) and therefore, ${\rm
Fall}(G_{i})\neq\emptyset$. Conversely, suppose that for each
$1\leq i\leq n$, $k_{i}\in {\rm Fall}(G_{i})$. For each $1\leq
i\leq n$, construct a fall $k_{i}$-coloring of the induced
subgraph of $\bigvee_{i=1}^{n}G_{i}$ on $\{i\}\times V(G_{i})$
with the color set
$\{(\sum_{j=1}^{i-1}k_{j})+1,(\sum_{j=1}^{i-1}k_{j})+2,...,(\sum_{j=1}^{i-1}k_{j})+k_{i}\}$.
This forms a fall $(\sum_{i=1}^{n}k_{i})$-coloring of
$\bigvee_{i=1}^{n}G_{i}$ and therefore, ${\rm
Fall}(\bigvee_{i=1}^{n}G_{i})\neq\emptyset$. }\end{proof}

The proof of the following obvious theorem has omitted for the
sake of brevity.

\begin{theorem}{
\label{6parts} Let $n\in \mathbb{N}\setminus\{1\}$ and for each
$1\leq i\leq n$, $G_{i}$ be a graph. Then,

1)If for each $1\leq i\leq n$, ${\rm Fall}(G_{i})\neq\emptyset$,
then, ${\rm Fall}(\bigvee_{i=1}^{n}G_{i})=\sum_{i=1}^{n}{\rm
Fall}(G_{i}):=\{a_{1}+...+a_{n}|\ a_{1}\in {\rm
Fall}(G_{1}),...,\ a_{n}\in {\rm Fall}(G_{n})\}$ and
$\chi_{f}(\bigvee_{i=1}^{n}G_{i})=\sum_{i=1}^{n}\chi_{f}(G_{i})$
and
$\psi_{f}(\bigvee_{i=1}^{n}G_{i})=\sum_{i=1}^{n}\psi_{f}(G_{i})$.

2) $\chi(\bigvee_{i=1}^{n}G_{i})=\sum_{i=1}^{n}\chi(G_{i})$.

3) $\phi(\bigvee_{i=1}^{n}G_{i})=\sum_{i=1}^{n}\phi(G_{i})$.

4) $\Gamma(\bigvee_{i=1}^{n}G_{i})=\sum_{i=1}^{n}\Gamma(G_{i})$.

5)
$\partial\Gamma(\bigvee_{i=1}^{n}G_{i})=\sum_{i=1}^{n}\partial\Gamma(G_{i})$.

6) $\psi(\bigvee_{i=1}^{n}G_{i})=\sum_{i=1}^{n}\psi(G_{i})$.}
\end{theorem}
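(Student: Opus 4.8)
The plan is to reduce all six identities to a single structural observation about proper colorings of a join. Write $V_i := \{i\} \times V(G_i)$ for the $i$-th block, and note two features of $\bigvee_{i=1}^n G_i$: first, the subgraph induced on $V_i$ is a copy of $G_i$, so the intra-block neighborhood of a vertex $x \in V_i$ is exactly its $G_i$-neighborhood; second, for $i \neq j$ every vertex of $V_i$ is adjacent to every vertex of $V_j$. The second feature forces, in any proper coloring $f$, that the color sets $S_i := f(V_i)$ are pairwise disjoint (a color shared by $V_i$ and $V_j$ would put two adjacent vertices in the same class). Consequently any proper $k$-coloring of the join decomposes as a disjoint union of colorings of the blocks, with $k = \sum_i |S_i|$ whenever every color is actually used.

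Each identity then follows from the same two-directional template. For the \emph{combination} direction (which gives $\geq$ for the maximization parameters, the inclusion $\supseteq$ for the Fall set, and the upper bound for $\chi$), I take optimal colorings of the individual $G_i$ on pairwise disjoint color sets --- shifting $G_i$ into the interval $\{(\sum_{j<i} k_j)+1, \ldots, \sum_{j \leq i} k_j\}$ exactly as in the proof of Theorem 1 --- and check that the assembled coloring inherits the required property on the join. Here the first structural feature makes the check trivial: a vertex $x \in V_i$ automatically sees, on its closed neighborhood, every color of every other block $S_j$ (being adjacent to all of $V_j$, which realizes all of $S_j$), so the only thing to verify is the behavior inside its own block, guaranteed by the chosen coloring of $G_i$. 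For the \emph{restriction} direction, I take an optimal coloring of the join and argue that $f|_{V_i}$, after an order-preserving relabeling of $S_i$ onto $\{1, \ldots, |S_i|\}$, is a coloring of $G_i$ of the same type; disjointness of the $S_i$ is the crucial point, since any color of $S_i$ seen by $x \in V_i$ is realized inside $V_i$, i.e. inside $N_{G_i}(x)$ rather than across a block boundary.

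Carrying this out: Part 2 is the classical fact $\chi(\bigvee G_i) = \sum \chi(G_i)$ and needs nothing beyond disjointness together with restriction/combination of proper colorings. Parts 3 and 6 follow because the colorful, respectively complete, condition transfers across both directions --- for $\phi$, a color-dominating vertex of $G_i$ stays color-dominating in the join, and conversely a color-dominating vertex for a color of $S_i$ must lie in $V_i$ and must already see all of $S_i$ within $N_{G_i}[x]$; for $\psi$, two classes in the same block are joined by an edge of $G_i$, and two classes in different blocks are joined automatically. Part 1 follows likewise, the two directions yielding the Minkowski-sum identity ${\rm Fall}(\bigvee G_i) = \sum_i {\rm Fall}(G_i)$ (the nonemptiness hypothesis and the $\supseteq$ construction reuse Theorem 1 verbatim); the equalities for $\chi_f$ and $\psi_f$ are then just $\min$ and $\max$ of a Minkowski sum, which distribute over the summands.

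The step I expect to demand the most care is the ordered pair of parameters $\Gamma$ and $\partial\Gamma$ (Parts 4 and 5), because the Grundy condition depends on the linear order of the colors and a coloring of the join may interleave the colors of different blocks. On the combination side this is handled by choosing the disjoint color intervals in increasing block order, so that for $x \in V_i$ every color smaller than $f(x)$ either lies in a lower block $S_j$ (seen automatically through the complete join) or lies below $f(x)$ inside $S_i$ (seen in $N_{G_i}(x)$ because the shifted coloring is Grundy on $G_i$). On the restriction side, the order-preserving relabeling $\sigma : S_i \to \{1, \ldots, |S_i|\}$ is exactly what is needed: a color of $S_i$ below $f(x)$ is, by disjointness, witnessed inside $V_i$, and $\sigma$ preserves ``below'', so a Grundy (resp. partial-Grundy) vertex of the join restricts to one of $G_i$. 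Once this relabeling bookkeeping is in place, all six parts collapse to the single disjointness observation, which is presumably why the authors call the theorem obvious.
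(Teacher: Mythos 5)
Your proof is correct, and it is essentially the argument the paper has in mind: the paper omits the proof of this theorem entirely (calling it ``obvious''), and its proof of Theorem 1 uses exactly your two-directional template --- pairwise disjointness of the block color sets $f(\{i\}\times V(G_i))$ forcing every optimal coloring of the join to decompose blockwise, and shifted disjoint color intervals for the combination direction. The only point that genuinely goes beyond what the paper's Theorem 1 already contains is the order-sensitivity of $\Gamma$ and $\partial\Gamma$, and you handle it correctly via increasing block intervals and the order-preserving relabeling of $S_i$.
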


In \cite{dun}, Dunbar, et al. asked the following questions.
\\

1*) whether or not there exists a graph $G$ with ${\rm
Fall}(G)\neq\emptyset$ which satisfies
$\chi_{f}(G)-\chi(G)\geq3$? They noticed that
$\chi_{f}(C_{4}\square C_{5})=4$ and $\chi(C_{4}\square
C_{5})=3$, also, $\chi_{f}(C_{5}\square C_{5})=5$ and
$\chi(C_{5}\square C_{5})=3$.
\\

2*) Can $\chi_{f}(G)-\chi(G)$ be arbitrarily large?
\\

3*) Does there exist a graph $G$ with ${\rm
Fall(G)}\neq\emptyset$ which satisfies
$\chi(G)<\chi_{f}(G)<\psi_{f}(G)<\phi(G)<\partial\Gamma(G)<\psi(G)$?
\\

Since $\chi_{f}(C_{4}\square C_{5})=4$ and $\chi(C_{4}\square
C_{5})=3$, Theorem \ref{6parts} implies that For each $n\in
\mathbb{N}$, $\chi_{f}(\bigvee_{i=1}^{n}(C_{4}\square
C_{5}))-\chi(\bigvee_{i=1}^{n}(C_{4}\square C_{5}))=4n-3n=n$ and
this gives an affirmative answer to the problems 1* and 2*.

Also, the Theorem \ref{6parts} and the following theorem, give an
affirmative answer to all 3 questions immediately.

\begin{theorem}{
\label{thm} For each integer $\varepsilon>0$, there exists a
graph $G$ with $Fall(G)\neq\emptyset$ which the minimum of
$\chi_{f}(G)-\chi(G)$,  $\psi_{f}(G)-\chi_{f}(G)$,
$(\delta(G)+1)-\psi_{f}(G)$, $\Gamma(G)-\psi_{f}(G)$,
$\phi(G)-\psi_{f}(G)$, $(\Delta(G)+1)-\partial\Gamma(G)$,
 $\psi(G)-\partial\Gamma(G)$,
 $\partial\Gamma(G)-\Gamma(G)$ is greater
than $\varepsilon$.}
\end{theorem}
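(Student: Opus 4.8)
The plan is to reduce the whole statement to the construction of a single ``gadget'' graph by exploiting the additivity of the join recorded in Theorem~\ref{6parts}. First I would record the two elementary degree bounds that make six of the eight quantities meaningful: in any fall $k$-coloring every vertex $v$ sees $k$ distinct colors on the $d(v)+1$ vertices of $N[v]$, so $k\le\delta(G)+1$ and hence $\psi_f(G)\le\delta(G)+1$; likewise the vertex carrying the largest color in a partial Grundy $k$-coloring has $k-1$ colors below it on its neighbourhood, so $k\le\Delta(G)+1$ and $\partial\Gamma(G)\le\Delta(G)+1$. Combined with the chain $\chi\le\chi_f\le\psi_f\le\{\phi,\Gamma\}\le\partial\Gamma\le\psi$ from the introduction, this shows that all eight differences in the statement are automatically nonnegative, so the task is only to force each of them to be simultaneously large.

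Next I would pass to the join $G=\bigvee_{i=1}^{n}H$ of $n$ copies of a fixed non-complete graph $H$ with $\mathrm{Fall}(H)\neq\emptyset$. By Theorem~\ref{6parts}, each of $\chi,\chi_f,\psi_f,\phi,\Gamma,\partial\Gamma,\psi$ is exactly $n$ times its value on $H$, so the six ``chain'' differences (numbers $1,2,4,5,7,8$ in the list) equal $n$ times the corresponding difference for $H$ and grow linearly in $n$ as soon as each is positive for $H$. For the two degree differences I would use that every vertex of $G$ has degree $d_H(x)+(n-1)m$ with $m=|V(H)|$, so $\delta(G)=\delta(H)+(n-1)m$ and $\Delta(G)=\Delta(H)+(n-1)m$; hence $(\delta(G)+1)-\psi_f(G)=\bigl(\delta(H)+1-m\bigr)+n\bigl(m-\psi_f(H)\bigr)$ and $(\Delta(G)+1)-\partial\Gamma(G)=\bigl(\Delta(H)+1-m\bigr)+n\bigl(m-\partial\Gamma(H)\bigr)$. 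A short argument (the vertex of top color must be universal, and this propagates downward) shows that $\psi_f(H)=m$ or $\partial\Gamma(H)=m$ forces $H=K_m$, so for any non-complete $H$ both coefficients $m-\psi_f(H)$ and $m-\partial\Gamma(H)$ are at least $1$, and these two differences grow linearly in $n$ as well.

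Thus it suffices to produce one non-complete $H$ with $\mathrm{Fall}(H)\neq\emptyset$ for which all six chain inequalities are strict, namely $\chi(H)<\chi_f(H)<\psi_f(H)<\Gamma(H)$, $\psi_f(H)<\phi(H)$, $\Gamma(H)<\partial\Gamma(H)<\psi(H)$; taking $n>\varepsilon$ then finishes the proof. To build such an $H$ I would use Theorem~\ref{6parts} in reverse and set $H=\bigvee_{j}H_j$, where each $H_j$ is a small fall-colorable gadget responsible for exactly one of the six strict inequalities, the remaining differences of $H_j$ staying $\ge 0$ by the universal chain and the join of fall-colorable graphs being fall-colorable by Theorem~1. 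For example $C_4\square C_5$ supplies $\chi<\chi_f$, and $C_6$ (which admits both fall $2$- and fall $3$-colorings) supplies $\chi_f<\psi_f$; summing over $j$ turns the single positive gap contributed by each gadget into strict inequalities for $H$ in every position.

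The hard part is manufacturing the four remaining gadgets: graphs that simultaneously possess a fall coloring and realize a strict gap in $\Gamma-\psi_f$, in $\phi-\psi_f$, in $\partial\Gamma-\Gamma$, or in $\psi-\partial\Gamma$. The tension is genuine, since graphs engineered to have a large Grundy, $b$-chromatic, or achromatic number relative to their fall parameters tend to violate the local ``colorful at every closed neighbourhood'' regularity that a fall coloring demands, so each gadget has to be verified by hand both for fall-colorability and for its relevant extremal parameter. Once these gadgets are in hand, the bookkeeping above is routine: the additivity of the join upgrades their unit gaps to gaps of size at least $n>\varepsilon$ in all eight quantities at once.
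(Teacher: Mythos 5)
Your reduction is sound and, structurally, it is the same move the paper makes: everything rests on the additivity of all of these parameters under joins (Theorem~\ref{6parts}), and your amplification trick (join $n$ copies of one unit-gap graph $H$) is exactly how the paper itself handles questions 1* and 2* via $\bigvee_{i=1}^{n}(C_{4}\square C_{5})$. Your treatment of the two degree differences through non-completeness of $H$ is correct, and arguably cleaner than the paper's (which instead plugs in $K_{\varepsilon+2,\varepsilon+2}$ as a gadget with the large gap built in). One small quantitative slip: taking $n>\varepsilon$ alone does not finish, because $(\delta(G)+1)-\psi_{f}(G)=(\delta(H)+1-m)+n\bigl(m-\psi_{f}(H)\bigr)$ carries the negative constant $\delta(H)+1-m$, so $n$ must also be taken large relative to $|V(H)|$; linearity in $n$ repairs this, but the sentence as written is wrong.

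The genuine gap is exactly where you flag ``the hard part'': you never construct fall-colorable graphs realizing the strict gaps $\Gamma-\psi_{f}\geq 1$, $\phi-\psi_{f}\geq 1$, $\partial\Gamma-\Gamma\geq 1$, and $\psi-\partial\Gamma\geq 1$, and those constructions are precisely the substantive content of the theorem --- they are what the paper's Steps 4, 5, 6 and 8 supply. Moreover, the tension you describe is not actually there: any bipartite graph with at least one edge and no isolated vertex is fall $2$-colorable (in the proper $2$-coloring of each component every vertex has a neighbor, and all its neighbors carry the opposite color), so trees and paths may be used freely as gadgets, and for them $\psi_{f}\leq\delta+1=2$ comes for free. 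Concretely, the paper takes: a tree $T(\varepsilon+3)$ built by repeatedly attaching a pendant to every vertex, with $\Gamma=\varepsilon+3$ but $\psi_{f}\leq 2$; a path with many pendants at each vertex, with $\phi$ large but $\psi_{f}=2$; a path with $i-2$ pendants at its $i$-th vertex, with $\partial\Gamma\geq\varepsilon+5$ but $\Gamma\leq 4$; and a long path, with $\psi$ large but $\partial\Gamma\leq\Delta+1\leq 3$. Unit-gap versions (e.g.\ $P_{4}$ has $\Gamma=3$, $\psi_{f}=2$; $P_{5}$ has $\phi=3$, $\psi_{f}=2$) would slot directly into your scheme. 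So your argument is repairable, but as written it reduces the theorem to four unproved existence claims rather than proving it.
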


\begin{proof}{

Let $\varepsilon>2$ be an arbitrary integer and let's follow the
following steps.

Step1) Set $G_{1}:=\bigvee_{i=1}^{\varepsilon+1}(C_{4}\square
C_{5})$. As stated above,
$\chi_{f}(G_{1})-\chi(G_{1})=\varepsilon+1$.
\\

Step2) Set $G_{2}:=K_{\varepsilon+3,\varepsilon+3}- ({\rm \ an\
arbitrary}\ 1-factor\ )$. One can easily observe that
$\psi_{f}(G_{2})-\chi_{f}(G_{2})=(\varepsilon+3)-2=\varepsilon+1$.
\\

Step3) Set $G_{3}:=K_{(\varepsilon+2,\varepsilon+2)}$. Then,
$(\delta(G_{3})+1)-\psi_{f}(G_{3})=((\varepsilon+2)+1)-2=\varepsilon+1$.
\\

Step4) Let $P_{\varepsilon+3}$ be a path with $\varepsilon+3$
vertices. Add $\varepsilon+2$ pendant vertices to each of its
vertices and denote the new graph by $G_{4}$. It is readily seen
that
$\phi(G_{4})-\psi_{f}(G_{4})\geq(\varepsilon+3)-2=\varepsilon+1$.
\\

Step5) Let $T(1)$ be the tree with only one vertex and for each
$k\geq1$, $T(k+1)$ be the graph obtained by adding a new pendant
vertex to each vertex of $T(k)$. $T(\varepsilon+3)$ is a tree
which its Grundy number is $\varepsilon+3$ and
$\psi_{f}(T(\varepsilon+3))\leq \delta(T(\varepsilon+3))+1\leq2$.
Hence, if we set $G_{5}:=T(\varepsilon+3)$, then,
$\Gamma(G_{5})-\psi_{f}(G_{5})=\varepsilon+1$.
\\

Step6) Let $G_{6}$ be the graph obtained by adding $i-2$ pendant
vertices to each vertex $v_{i}(3\leq i\leq\varepsilon+5)$ of the
path $v_{1}v_{2}\ldots v_{\varepsilon+5}$. Obviously,
$\partial\Gamma(G_{6})\geq\varepsilon+5$ and
$\Gamma(G_{6})\leq4$. So,
$\partial\Gamma(G_{6})-\Gamma(G_{6})\geq\varepsilon+1$.
\\

Step7) Set $G_{7}:=K_{\varepsilon+2,\varepsilon+2}$.
$(\Delta(G_{7})+1)-\partial\Gamma(G_{7})=(\varepsilon+3)-2=\varepsilon+1$.
\\

Step8) Set $G_{8}:=P_{\frac{(\varepsilon+4)(\varepsilon+3)}{2}}$.
Obviously, $\psi(G_{8})\geq\varepsilon+4$ and
$\partial\Gamma(G_{8})\leq\triangle(G_{8})+1\leq3$. Hence,
$\psi(G_{8})-\partial\Gamma(G_{8})\geq\varepsilon+1$.
\\

Step9) Set $G:=\bigvee_{i=1}^{8}G_{i}$. For each $1\leq i\leq 8$,
${\rm Fall}(G_{i})\neq\emptyset$. Hence, by Theorem \ref{6parts},
 the fact that
$\delta(\bigvee_{i=1}^{8}G_{i})\geq\sum_{i=1}^{8}\delta(G_{i})$,
and
$\Delta(\bigvee_{i=1}^{8}G_{i})\geq\sum_{i=1}^{8}\Delta(G_{i})$,
$G_{9}$ is a suitable graph for this theorem and also for
questions 1*, 2* and 3*.

}\end{proof}

\end{document}